
\documentclass[12pt]{amsart}

\usepackage{amsmath,amsthm}

\usepackage{amssymb}

\usepackage{pdfsync}


       %
\newcommand{\R}{{\mathbb R}}       
       %

\newcommand{\HH}{{\mathcal H}}

\newcommand{\diam}{{\rm diam}}
\newcommand{\dist}{{\rm dist}}

\newcommand{\rf}[1]{{(\ref{#1})}}
\newcommand{\supp}{{\rm supp\,}}

\newcommand{\ve}{{\varepsilon}}
\newcommand{\vv}{{\vspace{2mm}}}

\newcommand{\wt}[1]{{\widetilde{#1}}}

\newcommand{\rest}{{\lfloor}}



\newtheorem{theorem}{Theorem}[section]
\newtheorem*{theorema*}{Theorem A}
\newtheorem*{theoremb*}{Theorem B}
\newtheorem*{theoremc*}{Theorem C}
\newtheorem*{theoremd*}{Theorem D}
\newtheorem{lemma}[theorem]{Lemma}

\newtheorem{propo}[theorem]{Proposition}

\theoremstyle{definition}

\newcommand{\ci}[1]{_{{}_{\scriptstyle#1}}}

\theoremstyle{remark}

\numberwithin{equation}{section}


\begin{document}

\title[The Riesz transform, rectifiability, and removability]{The Riesz transform, rectifiability, and removability for Lipschitz harmonic functions}

\author{Fedor Nazarov}
\address{Fedor Nazarov, Department of Mathematical Sciences, Kent State University, Kent, Ohio, USA}

\author{Xavier Tolsa}
\address{Xavier Tolsa, ICREA/Universitat Aut\`onoma de Barcelona, Barcelona, Catalonia}

\author{Alexander Volberg}
\address{Alexander Volberg, Department of mathematics,  Michigan State University, East Lansing, Michigan, USA}

\thanks{F.N.\ was partially supported by the U.S.\ NSF grant DMS-0800243.
X.T.\ was partially supported by 
the ERC grant 320501 of the European Research
Council (FP7/2007-2013) 
and by the
grants
2009SGR-000420 (Catalonia) and MTM-2010-16232 (Spain). 
A.V.\ was partially supported by the U.S.\ NSF grant DMS-0758552}

\begin{abstract}
We show that, given a set $E\subset\R^{n+1}$ with finite $n$-Hausdorff measure $\HH^n$,
if the $n$-dimensional Riesz transform 
$$R_{\HH^n\rest E} f(x) = \int_{E} \frac{x-y}{|x-y|^{n+1}}\,f(y)d\HH^n(y)$$
is bounded in $L^2(\HH^n\rest E)$, then $E$ is $n$-rectifiable. From this result we deduce that
a compact set $E\subset\R^{n+1}$ with $\HH^n(E)<\infty$ is removable for
Lipschitz harmonic functions if and only if it is purely $n$-unrectifiable,  thus proving the analog of Vitushkin's conjecture in higher dimensions.
\end{abstract}

\maketitle

\section{Introduction}

In this paper we prove that, given a set $E\subset\R^{n+1}$ with finite $n$-Hausdorff measure $\HH^n$,
if the $n$-dimensional Riesz transform is bounded in $L^2$ with respect to $\HH^n\rest E$, then $E$ is $n$-rectifiable. Combined with results from \cite{Vo}, it implies that
the purely $n$-unrectifiable compact sets with finite $n$-Hausdorff measure are removable for
Lipschitz harmonic functions.

To state our results in more detail, we need to introduce some notation. Given a (complex) Borel measure
$\nu$ in $\R^{n+1}$ such that  
\begin{equation}
\label{infi}
\int_{\R^{n+1}} \frac{d |\nu|(x)}{(1+|x|)^n}<\infty\,,
\end{equation}
the $n$-dimensional Riesz transform of $\nu$ is defined by
$$R\nu(x) = \int \frac{x-y}{|x-y|^{n+1}}\,d\nu(y),$$
for every $x\in\R^{n+1}$ where the integral makes sense. Notice that the kernel inside the integral is vectorial.

Since the preceding integral may fail to be absolutely convergent
for many points $x\in\R^{n+1}$, it is convenient to consider an $\ve$-truncated version, for $\ve>0$:
$$R_\ve\nu(x) = \int_{|x-y|>\ve} \frac{x-y}{|x-y|^{n+1}}\,d\nu(y).$$
Given  a non-negative measure $\mu$ and $f\in L^1_{loc}(\mu)$ such that $\nu=f\mu$ satisfies \eqref{infi}, we set $R_\mu f(x) = R(f\,\mu)(x)$ and $R_{\mu,\ve} f(x) = R_\ve(f\,\mu)(x)$.
We say that $R_\mu$ is bounded in $L^2(\mu)$ if the truncated operators
$R_{\mu,\ve}$ are bounded in $L^2(\mu)$ uniformly on $\ve>0$.

A set $E\subset\R^{n+1}$ is called $n$-rectifiable if it is  contained
in a countable union of $C^1$-manifolds up to a set of zero $\HH^n$ measure.
On the other hand, $E$ is called purely unrectifiable it does not have any rectifiable subset with positive
$n$-Hausdorff measure.

Our first result in this paper is the following.

\vv
\begin{theorem}\label{teo1}
Let $E\subset\R^{n+1}$ be a set such that $\HH^n(E)<\infty$. 
If $R_{\HH^n\rest E}$ is bounded in $L^2(\HH^n\rest E)$, then $E$ is $n$-rectifiable.
\end{theorem}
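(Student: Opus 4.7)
The approach is by contradiction: assume $R_{\HH^n\rest E}$ is $L^2$-bounded but $E$ is not $n$-rectifiable, then derive a contradiction by producing an impossible ``bad'' limit measure through a blow-up. After restricting to a suitable compact subset, we may work with a finite purely $n$-unrectifiable measure $\mu=\HH^n\rest E$ whose upper and lower $n$-densities are positive and finite $\mu$-a.e., for which $R_\mu$ remains bounded in $L^2(\mu)$, and at $\mu$-a.e.\ point of which Preiss's tangent-measure theorem furnishes at least one non-flat tangent measure.

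On this $\mu$ I would construct a David--Mattila dyadic lattice (which behaves well without the doubling hypothesis) and run a stopping-time argument isolating the cubes $Q$ where $\mu$ is quantitatively far from being supported on a finite union of $n$-planes---i.e., cubes that fail the \emph{BAUP} condition of David--Semmes. Pure $n$-unrectifiability, combined with Preiss's theorem, forces a positive $\mu$-mass of points to lie in non-BAUP cubes at arbitrarily small scales; this is the source of geometric obstruction that must be converted into an analytic contradiction.

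Next, perform a careful blow-up around a suitably chosen point along a sequence of non-BAUP cubes shrinking to it. The limit should be a Radon measure $\sigma$ on $\R^{n+1}$ with $n$-dimensional polynomial growth $\sigma(B(x,r))\leq Cr^n$, whose support is not contained in any affine $n$-plane, and for which the truncated Riesz transforms $R_{\sigma,\ve}\bfu$ stay bounded in $L^2(\sigma)$ uniformly in $\ve>0$. A crucial input is a non-homogeneous $T(1)$ theorem of Nazarov--Treil--Volberg that translates the $L^2$-boundedness hypothesis on $\mu$ into local testing conditions robust under the blow-up and under the removal of the bad cubes of the stopping time.

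The main obstacle is to show that \emph{no such $\sigma$ can exist}. When $n=1$, Melnikov's symmetrization expresses $\|R_\sigma \bfu\|_{L^2(\sigma)}^2$ as an integral of Menger curvature over triples, giving an immediate geometric obstruction to non-flatness. No positive trilinear identity is available for the higher-dimensional Riesz kernel, so one must replace it by a variational/oscillation argument: combine the non-BAUP structure of $\sigma$ with Tolsa's $\alpha$-coefficients to detect oscillation of $R_\sigma\bfu$ at small scales, then engineer a test function supported near a best-approximating plane against which $\langle R_\sigma\bfu, f\rangle$ grows without bound, contradicting the uniform $L^2$-bound. Extracting such a quantitative obstruction to flatness from mere $L^2$-boundedness of $R$, in the absence of a pointwise curvature identity, is the technical heart of the argument.
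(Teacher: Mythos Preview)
Your plan differs fundamentally from the paper's proof. The paper does \emph{not} argue by blow-up or tangent measures; instead it reduces Theorem~\ref{teo1} to two external results via a covering argument. First, it invokes \cite{ENV} to show that the set $E_0=\{x:\theta^n_*(x,\mu)=0\}$ has $\mu$-measure zero, so that the lower density is positive $\mu$-a.e. Then the Main Lemma~\ref{teocov}---an elaboration of Pajot's covering theorem---produces AD-regular measures $\mu_k$ with $\mu\le\sum_k\mu_k$ and with $R_{\mu_k}$ bounded in $L^2(\mu_k)$; this reduction is the only technical work actually carried out in the present paper. Finally, \cite{NToV} together with \cite{DS2} gives that each $\supp\mu_k$ is (uniformly) $n$-rectifiable, whence so is $E$.

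Your plan has two genuine gaps. First, you assert without justification that after restriction one may assume $\theta^n_*(x,\mu)>0$ for $\mu$-a.e.\ $x$. For a purely unrectifiable $E$ with $\HH^n(E)<\infty$ this is \emph{not} a standard density fact; ruling out the zero-lower-density part is exactly the content of \cite{ENV}, itself a substantial result that your plan silently presupposes. Second, your ``main obstacle''---showing that no non-flat measure $\sigma$ with $n$-growth can have $R_\sigma$ bounded in $L^2(\sigma)$---is essentially the David--Semmes conjecture in codimension~$1$, i.e.\ the content of \cite{NToV}. The mechanism you sketch ($\alpha$-coefficients plus an ad hoc test function forcing $\langle R_\sigma\bfu,f\rangle$ to diverge) does not match the actual argument there, which relies on a quasiorthogonality scheme and a delicate variational/monotonicity estimate specific to the codimension-$1$ Riesz kernel; nothing in your outline indicates how to obtain the required quantitative lower bound on the oscillation of $R_\sigma\bfu$ in terms of non-BAUP data, let alone without AD-regularity. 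In short, your plan absorbs both deep inputs that the paper explicitly cites, while replacing the paper's clean reduction-to-AD-regular step by a direct argument that, at the level of detail given, is not a proof.
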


\vv
Let us remark that the case $n=1$ of this theorem has already been 
known. Indeed,  the 
$1$-dimensional Riesz transform and the Cauchy transform coincide modulo a conjugation. So the
$L^2(\mu)$ boundedness of $R_\mu$ for any non-atomic measure $\mu$ 
implies that the curvature of $\mu$ is finite, i.e.,
$$
c^2(\mu)=\iiint \frac1{R(x,y,z)^2}\,d\mu(x)\,d\mu(y)\,d\mu(z)<\infty\,,
$$
where $R(x,y,z)$ stands for the radius of the circumference passing through $x,y,z$
(see \cite{Melnikov} and \cite{MV}).
It remains to refer to the result of David and L\'eger (see \cite{Leger}) who showed that
 if $\HH^1(E)<\infty$ and $c^2(\HH^1\rest E)<\infty$,
then $E$ is $1$-rectifiable.

In the higher dimensional setting, Theorem \ref{teo1} was an open problem (see \cite[p.\ 114]{Pajot},
for example). The main reason is that 
the curvature method is not available because for $n>1$, there is no relationship
between the $n$-dimensional Riesz transform and any notion as useful as curvature.
However, some partial results were known. For example, in \cite{Tolsa-jfa} it was shown that the
existence of the principal values $\lim_{\ve\to0}R_{\ve}(\HH^n\rest E)(x)$ for $\HH^n$-a.e.\ $x\in E$ implies the $n$-rectifiability of $E$ (see also \cite{Mattila-Preiss} for a previous result under somewhat stronger assumptions). On the other hand, from some new results in \cite{ENV} it follows that 
the Riesz transform $R_{\HH^n\rest E}$ cannot be bounded in $L^2(\HH^n\rest E)$ if $E$ is a  set with finite $\HH^n$ measure and  vanishing lower $n$-dimensional density $\theta^{n}_*(x,\HH^n\rest E)=\liminf_{r\to0}r^{-n}\HH^n(B(x,r)\cap E)$ for $\HH^n$-a.e\ $x\in E$.

Very recently, the proof of the so called David-Semmes
conjecture in the codimension $1$ case was completed. The conjecture follows from the results
of our new paper \cite{NToV} and  deep results by David and Semmes
in \cite{DS2}.
 The assertion is that if $\HH^n\rest E\subset \R^{n+1}$ is Ahlfors--David regular,
and $R_{\HH^n\rest E}$ is bounded in $L^2(\HH^n\rest E)$, then $E$ is uniformly $n$-rectifiable and
thus, in particular, $n$-rectifiable. 
Recall that a measure $\mu$ is called Ahlfors--David (AD) regular 
if
there exists some constant $c$ such that
$$
c^{-1}r^n\leq \mu(B(x,r))\leq c\,r^n\qquad \mbox{for all $x\in E$, $r>0$}\,.
$$

A nice covering theorem due to Pajot \cite{Pajot1} will allow us to reduce Theorem \ref{teo1} to the  combination of this result with the one from \cite{ENV}.

Let us remark that we do not know if Theorem \ref{teo1} can be extended to codimensions higher than $1$, that
is, to the $n$-dimensional Riesz transform and sets $E\subset \R^d$ with $0<\HH^n(E)<\infty$, $d>n+1$.
This is due to the fact that the corresponding analogs of the results from \cite{NToV} and \cite{ENV} are also open in this case.

Theorem \ref{teo1} has a corollary regarding the removability of singularities for Lipschitz harmonic functions.
Recall that a subset $E\subset\R^{n+1}$ is removable for Lipschitz harmonic functions if, for each open set $
\Omega\subset\R^{n+1}$, every Lipschitz function $f:\Omega\to\R$ that is harmonic in $\Omega\setminus E$ is 
harmonic in the whole $\Omega$. By combining Theorem \ref{teo1} with the results on the Lipschitz harmonic 
capacity from \cite{Vo}, one gets the proof of the following analog of Vitushkin's conjecture in higher
dimensions.

\vv
\begin{theorem}\label{teo2}
Let $E\subset\R^{n+1}$ be a compact set such that $\HH^n(E)<\infty$.
Then, $E$ is removable for Lipschitz harmonic functions in $\R^{n+1}$ if and only if $E$
is purely $n$-unrectifiable.
\end{theorem}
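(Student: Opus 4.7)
The plan is to derive Theorem~\ref{teo2} by combining Theorem~\ref{teo1} with the characterization of the Lipschitz harmonic capacity $\kappa$ obtained in \cite{Vo}. The consequence of \cite{Vo} that I will use is that a compact $E\subset\R^{n+1}$ fails to be removable for Lipschitz harmonic functions if and only if there exists a non-zero positive Borel measure $\mu$ supported on $E$, with $n$-dimensional linear growth $\mu(B(x,r))\leq C r^n$, for which the Riesz transform $R_\mu$ is bounded on $L^2(\mu)$.

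For the direction \textbf{purely $n$-unrectifiable $\Rightarrow$ removable}, I argue by contradiction. Suppose $E$ is purely $n$-unrectifiable yet admits such a $\mu$. Because $\mu$ is finite, has bounded upper $n$-density, and $\HH^n(E)<\infty$, a standard Vitali-style covering argument forces $\mu\ll\HH^n\rest E$, so I write $\mu=g\,\HH^n\rest E$ with $g$ bounded and nonnegative. Choosing $\tau>0$ so that $F:=\{x\in E:g(x)\geq\tau\}$ has $\HH^n(F)>0$, on $F$ the two measures $\mu$ and $\HH^n\rest F$ are pointwise comparable. The pointwise identity $R_{\HH^n\rest F,\ve}f(x)=R_{\mu,\ve}(1_F\,f/g)(x)$ for $x\in F$, combined with $1/g\in L^\infty(F)$, transfers $L^2(\mu)$-boundedness of $R_\mu$ into $L^2(\HH^n\rest F)$-boundedness of $R_{\HH^n\rest F}$. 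Theorem~\ref{teo1} then gives $n$-rectifiability of $F$, contradicting the pure $n$-unrectifiability of $E$.

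For the direction \textbf{not purely $n$-unrectifiable $\Rightarrow$ not removable}, by definition of rectifiability I may choose a compact $G\subset E$ with $\HH^n(G)>0$ contained in a single Lipschitz $n$-graph. Classical Calder\'on--Coifman--McIntosh--Meyer-type results for Lipschitz graphs yield $L^2(\HH^n\rest G)$-boundedness of $R_{\HH^n\rest G}$, and $\HH^n\rest G$ obviously has $n$-linear growth, so the characterization from \cite{Vo} gives $\kappa(E)>0$ and hence non-removability. Equivalently, the single-layer Newtonian potential of $\HH^n\rest G$ is Lipschitz on $\R^{n+1}$, harmonic off $G\subset E$, but has a nonzero distributional Laplacian, so it witnesses non-removability directly.

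The only delicate step is the transfer in the first direction: establishing $\mu\ll\HH^n\rest E$, carving out a subset $F$ on which $\mu$ and $\HH^n\rest F$ are comparable, and transporting the $L^2$-boundedness of the Riesz transform across this comparison. Each of these pieces is a short density-and-change-of-measure computation, so essentially all the geometric depth of Theorem~\ref{teo2} is concentrated in Theorem~\ref{teo1} and the results of \cite{Vo}.
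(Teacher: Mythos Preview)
Your argument is correct and the nontrivial direction (purely $n$-unrectifiable $\Rightarrow$ removable, argued by contrapositive) is essentially identical to the paper's proof: invoke \cite{Vo} to obtain a measure $\mu$ with $n$-growth and bounded Riesz transform, show $\mu=g\,\HH^n\rest E$ with $g$ bounded via the growth condition, pass to $F=\{g\geq\tau\}$, transfer $L^2$-boundedness to $R_{\HH^n\rest F}$, and apply Theorem~\ref{teo1}. For the converse direction the paper simply cites \cite{MP}, while you sketch a direct construction via a Lipschitz-graph piece $G\subset E$ and the characterization from \cite{Vo}; both routes are valid and standard.
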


\vv
Again, this theorem was already known in the case $n=1$. 
It was proved by David and Mattila in \cite{David-Mattila}. The analogous result for the removable
singularities for bounded analytic functions is the celebrated solution of Vitushkin's conjecture
by David \cite{David-vitus}. It is also worth mentioning that in \cite{NTrV2},
a $Tb$ type theorem suitable to prove the analytic part of Vitushkin's conjecture was obtained
very shortly after David's proof. The arguments in \cite{NTrV2}
 also work in the case $n>1$ and are an essential tool in the work  \cite{Vo}
 about Lipschitz harmonic capacity.


\section{The main lemma}

\subsection{Statement of the Main Lemma}
We say that a Borel measure $\mu$ in $\R^d$ has growth  of degree $n$ if there exists some constant $c$ such
that 
$$
\mu(B(x,r))\leq c\,r^n\qquad \mbox{for all $x\in\R^d$, $r>0$.}
$$

We define  the upper and lower $n$-dimensional densities by
$$\theta^{n,*}(x,\mu)=\limsup_{r\to0}r^{-n}\mu(B(x,r))\text{ and } \theta^{n}_*(x,\mu)=\liminf_{r\to0}r^{-n}\mu(B(x,r)),$$
respectively.

If $\mu$ and $\sigma$ are Borel measures on $\R^d$, the notation $\mu\leq \sigma$ means that
$\mu(A)\leq\sigma(A)$ for all Borel sets $A\subset\R^d$.


\begin{lemma}[Main Lemma]\label{teocov}
Let $\mu$ be a compactly supported finite Borel measure in $\R^d$ with growth of degree $n$ such that 
$\theta^{n}_*(x,\mu)>0$ for $\mu$-a.e. $x\in\R^d$. Suppose that
$R_\mu$ is bounded in $L^2(\mu)$. Then there are finite Borel measures $\mu_k$, $k\geq 1$, such that 
\begin{itemize}
\item[(a)] $\mu \leq \sum_{k\geq 1}\mu_k$
\item[(b)] $\mu_k$ is AD-regular for each $k\geq1$ (with the AD-regularity constant depending on $k$), and
\item[(c)] for each $k\geq1$, $R_{\mu_k}$ is bounded in $L^2(\mu_k)$.
\end{itemize}
\end{lemma}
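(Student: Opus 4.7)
The strategy is to first reduce to the case of uniform positive lower density on small scales, and then on each piece enlarge $\mu$ to a globally AD-regular measure that still carries a bounded Riesz transform on $L^2$. For each integer $m\geq 1$ define
$$
E_m=\Bigl\{x\in\supp\mu:\mu(B(x,r))\geq \tfrac1m\,r^n\text{ for every }0<r\leq \tfrac1m\Bigr\}.
$$
Since $\theta^n_*(x,\mu)>0$ for $\mu$-almost every $x$, the sets $E_m$ exhaust $\supp\mu$ up to a $\mu$-null set, and consequently $\mu\leq\sum_{m\geq 1}\mu\rest E_m$. It therefore suffices to construct, for each $m$, a finite AD-regular measure $\mu_m\geq \mu\rest E_m$ with $R_{\mu_m}$ bounded in $L^2(\mu_m)$.

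To build $\mu_m$, I would take a Whitney-type decomposition of the complement of $E_m$ inside a large ball containing $\supp\mu$, obtaining cubes $\{Q_i\}$ with $\ell(Q_i)\approx \dist(Q_i,E_m)$, and attach to each $Q_i$ a normalized piece $\sigma_i$ of $n$-dimensional Hausdorff measure on a flat $n$-disk inside $Q_i$, with total mass $\sigma_i(Q_i)\approx \ell(Q_i)^n$. Set
$$
\mu_m=\mu\rest E_m+\sum_i \sigma_i.
$$
The upper AD-regularity bound $\mu_m(B(x,r))\lesssim r^n$ follows from the growth of $\mu$ together with the bounded overlap of the Whitney cubes, while the lower bound $\mu_m(B(x,r))\gtrsim r^n$ comes from the defining property of $E_m$ at small scales when $x\in E_m$, and from the Whitney piece $Q_i\ni x$ of diameter comparable to $r$ in all other cases.

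The main obstacle is to show that $R_{\mu_m}$ is bounded in $L^2(\mu_m)$. I would split $L^2(\mu_m)=L^2(\mu\rest E_m)\oplus L^2(\sum_i\sigma_i)$ and analyze the resulting four blocks. The diagonal block on $\mu\rest E_m$ is controlled by the hypothesis that $R_\mu$ is bounded in $L^2(\mu)$; the diagonal block on $\sum_i\sigma_i$ is estimated by a Schur test, using the $n$-flatness of each $\sigma_i$ and the good separation between distinct Whitney regions relative to their sizes. To dispatch the off-diagonal blocks and assemble an $L^2(\mu_m)$ bound, I would invoke the non-homogeneous $T1$ theorem of Nazarov--Treil--Volberg, which for the AD-regular measure $\mu_m$ reduces $L^2$-boundedness of $R_{\mu_m}$ to a weak boundedness property together with $R_{\mu_m}\mathbf{1}\in BMO(\mu_m)$. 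Both conditions can be verified by decomposing the relevant testing integrals into a contribution from $\mu\rest E_m$, handled via the assumed $L^2(\mu)$-boundedness of $R_\mu$, and a contribution from the $\sigma_i$'s, which is a convergent geometric sum thanks to the Whitney packing. Tracking the dependence of all implicit constants on $m$ and closing up these estimates is where the bulk of the technical work lies.
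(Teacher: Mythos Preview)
Your construction of $\mu_m$ does not work as written: the added measure $\sum_i\sigma_i$ is typically infinite and fails the upper AD bound. If you Whitney-decompose the full complement of $E_m$ inside a large ball and place an $n$-disk of mass $\approx\ell(Q_i)^n$ in every cube $Q_i$, then near any point $x\in E_m$ the sum $\sum_{Q_i\subset B(x,r)}\ell(Q_i)^n$ diverges: at each dyadic distance scale $2^{-k}\lesssim r$ from $E_m$ there are on the order of $(r/2^{-k})^n$ Whitney cubes of side $\approx 2^{-k}$, each contributing $\approx 2^{-kn}$, so every scale gives $\approx r^n$ and the sum over $k$ is $+\infty$. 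Bounded overlap of Whitney cubes controls pointwise multiplicity, not this $n$-dimensional packing sum, so your upper-regularity claim is false and $\mu_m$ is not even locally finite. There is a second gap in the lower-regularity step: for $x\in E_m$ the defining inequality gives $\mu(B(x,r))\ge r^n/m$, but $\mu_m$ only retains $\mu\rest E_m$; the mass $\mu(B(x,r)\setminus E_m)$ is discarded and is not a priori compensated by the $\sigma_i$.

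The paper repairs both points with two devices you are missing. First, a \emph{double} filtration: one passes from $F_p=\{x:\mu(B(x,r))\ge r^n/p\}$ to $F_{p,s}=\{x\in F_p:\mu(F_p\cap B(x,r))\ge r^n/(ps)\}$, so that it is $\mu\rest F_p$ (not merely $\mu$) that has lower density at the base points. Second, disks are placed only along a Besicovitch cover of $F_p\setminus F_{p,s}$ by balls \emph{centered on $F_p$}, not on the entire complement. Because each such ball $B_x$ is centered on $F_p$, it carries $\mu(B_x)\gtrsim_p r(x)^n$; disjointness then gives $\sum_{B_x\subset 5\Delta} r(x)^n\lesssim\mu(5\Delta)\lesssim r(\Delta)^n$, which is exactly the upper bound. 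The same disk-to-$\mu$-mass comparison is reused for part (c): the paper compares $R$ on the disk measure to $R$ on a reweighted restriction of $\mu$ to the very same balls, and closes the argument with an elementary local/nonlocal splitting plus Proposition~\ref{proposuma}, avoiding the $T1$ theorem entirely. Your Whitney cubes carry no $\mu$-mass in general, so neither the upper bound nor this comparison is available to you.
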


\vv

Before proving the Main Lemma, we will show  how it allows one to reduce Theorem \ref{teo1} 
 to the results in \cite{ENV} and \cite{NToV}.

As usual in harmonic analysis, the letter $c$ stands for some fixed constant (quite often an absolute constant),
which may change its value at different occurrences. On the other hand, constants with subscripts, such as
$c_1$, are assumed to keep their values in the whole paper.


\subsection{Proof of Theorem \ref{teo1} using the Main Lemma \ref{teocov}}

It is immediate to check that to prove the theorem, we may assume $E$ to be bounded.
So let $E\subset\R^{n+1}$ be a bounded set with $\HH^n(E)<\infty$. Set $\mu= \HH^n\rest E$, and suppose that 
$R_\mu$ is bounded in $L^2(\mu)$. 

Let $E_0$ be the subset of those $x\in E$ for which $\theta^n_*(x,\mu)=0$.
We set 
$$
\mu_0=\mu\rest E_0.
$$
Then,
$$
\theta^n_*(x,\mu_0)\le \theta^{n,*}(x,\mu)=0 \,\,\,\mbox{for $\mu_0$-a.e.\ $x\in\R^{n+1}$},
$$ 
and, moreover, $R_{\mu_0}$ is bounded in $L^2(\mu_0)$. Then, by the main theorem of \cite{ENV} (applied to the codimension $1$ case) we deduce that $\mu_0=0$.
That is, 
$$\theta^{n}_*(x,\mu)>0 \qquad \mbox{for $\mu$-a.e.\ $x\in\R^{n+1}$}.$$
So the measure $\mu$ satisfies the assumptions of Main Lemma \ref{teocov}, and thus we may consider
measures $\mu_k$ as in the statement of the Main Lemma.

 By the results  of  \cite{NToV}and \cite{DS2}, $\supp\mu_k$ is $n$-rectifiable. Therefore,
$$F=\bigcup_{k\geq1} \supp\mu_k$$
is also $n$-rectifiable. Since
$$\HH^n(E\setminus F) = \mu(\R^d\setminus F)\leq \sum_k \mu_k(\R^d\setminus F) =0,$$
we infer that $E$ is $n$-rectifiable too.


\section{Proof of the Main Lemma \ref{teocov}}


For the proof of the Main Lemma \ref{teocov} we will need the following proposition.

\begin{propo}\label{proposuma}
Let $\mu$ and $\sigma$ be Borel measures with growth of degree $n$ in $\R^d$ such that $R_\mu$ is bounded in $L^2(\mu)$ and $R_\sigma$ is bounded in $L^2(\sigma)$. Then, $R_{\mu+\sigma}$
is bounded in $L^2(\mu+\sigma)$.
\end{propo}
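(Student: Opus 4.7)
Set $\nu = \mu + \sigma$, which also has growth of degree $n$. The plan is to decompose $\|R_{\nu,\ve}f\|^2_{L^2(\nu)}$ into diagonal and cross pieces, bound the diagonal pieces directly from the hypotheses, and reduce the cross pieces to a single estimate via duality. Concretely: for $f \in L^2(\nu)$ and $\ve > 0$, write $R_{\nu,\ve} f = R_{\mu,\ve} f + R_{\sigma,\ve} f$ (since $d\nu = d\mu + d\sigma$). Using $|a+b|^2 \le 2|a|^2 + 2|b|^2$ and splitting the $\nu$-integral, $\|R_{\nu,\ve}f\|^2_{L^2(\nu)}$ splits into four integrals. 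The diagonal ones $\int |R_{\mu,\ve}f|^2\,d\mu$ and $\int |R_{\sigma,\ve}f|^2\,d\sigma$ are bounded by $C\|f\|^2_{L^2(\nu)}$ via the hypotheses, so the proof reduces to the cross bounds
$$\|R_{\mu,\ve}f\|_{L^2(\sigma)} \le C\|f\|_{L^2(\mu)} \quad\text{and}\quad \|R_{\sigma,\ve}f\|_{L^2(\mu)} \le C\|f\|_{L^2(\sigma)} \qquad (*)$$
uniformly in $\ve$. The antisymmetry $K(x-y)=-K(y-x)$ of the Riesz kernel, combined with Fubini, yields the duality $\int R_{\mu,\ve}f\cdot g\,d\sigma = -\int f\cdot R_{\sigma,\ve}g\,d\mu$ for $f\in L^2(\mu)$, $g\in L^2(\sigma)$, so the two inequalities in $(*)$ are dual to one another and it suffices to prove the first.

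\textbf{Proving the cross bound.} The strategy is a non-homogeneous Calder\'on-Zygmund decomposition of $f$ at a suitable level $\lambda$ with respect to $\mu$: write $f = g + \sum_i b_i$, with $\|g\|_{L^\infty(\mu)} \le C\lambda$, each $b_i$ supported on a cube $Q_i$ with $\int b_i\,d\mu = 0$, and $\sum_i \mu(Q_i) \le C\lambda^{-2}\|f\|^2_{L^2(\mu)}$. Away from $\bigcup_i 2Q_i$, the vanishing mean of $b_i$ combined with the $C^1$-regularity of the Riesz kernel yields the standard pointwise off-support estimate
$$|R_{\mu,\ve} b_i(x)| \le C\,\frac{\ell(Q_i)\,\|b_i\|_{L^1(\mu)}}{\dist(x,Q_i)^{n+1}},$$
which, squared, summed over $i$, and integrated against $d\sigma$, is controlled via the $n$-growth of $\sigma$. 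The good part $g$, bounded in $L^\infty(\mu)\cap L^2(\mu)$, is then handled by a Cotlar-type pointwise inequality---valid thanks to the $L^2(\mu)$-boundedness of $R_\mu$---that dominates $|R_{\mu,\ve}g(x)|$ by maximal functions of $g$ and of $R_\mu g$, both of which can be integrated against $d\sigma$ via its $n$-growth and the weak $(1,1)$-type estimates for $R_\mu$.

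\textbf{Main obstacle.} The delicate points are (i) the contribution near $\bigcup_i 2Q_i$ from the bad pieces and (ii) the good-part estimate in $L^2(\sigma)$: in both cases one must transfer $L^2(\mu)$-control of $R_\mu$ to $L^2(\sigma)$-control, even though $\supp\mu$ and $\supp\sigma$ may overlap freely. The Cotlar inequality in the non-doubling setting is most naturally formulated at points $x\in\supp\mu$, so it must be extended (or bypassed) for $\sigma$-a.e.\ $x$, which may lie off $\supp\mu$. A natural remedy is to regularize the sharp truncation $\chi_{\{|x-y|>\ve\}}$ by a smooth cutoff: the resulting operator has a globally $C^\infty$ kernel whose action on $f\,d\mu$ can be estimated pointwise everywhere via the $n$-growth of $\mu$, and the error between the sharp and smoothed truncations is dominated by a maximal-type function of $f$ that is square-integrable against $d\sigma$ by the $n$-growth of $\sigma$. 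Combining these pieces yields the uniform (in $\ve$) cross bound and completes the proof.
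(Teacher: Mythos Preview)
Your reduction to the cross bound $\|R_{\mu,\ve}f\|_{L^2(\sigma)}\le C\|f\|_{L^2(\mu)}$ via antisymmetry is fine, but the proposed proof of that cross bound has a genuine gap. A Calder\'on--Zygmund decomposition at a \emph{single} level $\lambda$ does not by itself yield an $L^2\to L^2$ estimate; it is a tool for weak $(1,1)$, and to get $L^2$ one must either interpolate or sum over dyadic levels, neither of which you carry out. More seriously, your treatment of the good part $g$ asks for $\|R_{\mu,\ve}g\|_{L^2(\sigma)}$ to be controlled via a Cotlar inequality of the form $|R_{\mu,\ve}g(x)|\lesssim M_\mu g(x)+M_\mu(R_\mu g)(x)$. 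Even granting such a pointwise bound for every $x$, you then need $\|M_\mu h\|_{L^2(\sigma)}\lesssim\|h\|_{L^2(\mu)}$ applied to $h=R_\mu g\in L^2(\mu)$. This is a genuine two--weight maximal inequality between the unrelated measures $\mu$ and $\sigma$; the $n$-growth of $\sigma$ alone does not give it, and the ``weak $(1,1)$-type estimates for $R_\mu$'' you invoke live on the $\mu$-side, not the $\sigma$-side. So the argument is circular: to finish the good part you need essentially the cross estimate you are trying to prove. The smoothing of the truncation does not help here, because the obstruction is not the roughness of $\chi_{\{|x-y|>\ve\}}$ but the transfer of $L^2$-control from $\mu$ to $\sigma$.

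The paper bypasses this entirely by a different route. From the $L^2(\mu)$-boundedness of $R_\mu$ one first deduces (Theorem~9.1 of \cite{NTrV1}) that $R$ maps the space of finite measures $M(\R^d)$ into $L^{1,\infty}(\mu)$; likewise for $\sigma$. Adding the two weak-type inequalities gives $R:M(\R^d)\to L^{1,\infty}(\mu+\sigma)$, so in particular $R_{\mu+\sigma}$ is of weak type $(1,1)$ with respect to $\mu+\sigma$. One then returns to $L^2(\mu+\sigma)$ by interpolation in the non-homogeneous setting (Theorem~10.1 of \cite{NTrV1}, or a good-$\lambda$ argument). The point is that the weak $(1,1)$ endpoint adds cleanly across measures, whereas the strong $L^2$ cross terms do not.
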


\begin{proof}
The boundedness of $R_\mu$ in $L^2(\mu)$ implies the boundedness of $R$ from the space of real measures
$M(\R^{d})$ into $L^{1,\infty}(\mu)$. In other words, the following inequality holds for
any $\nu\in M(\R^{d})$ uniformly on $\ve>0$:
$$\mu\bigl\{x\in\R^{d}:|R_{\ve}\nu(x)|>\lambda\bigr\}\leq c\,\frac{\|\nu\|}\lambda
\qquad\mbox{for all $\lambda>0$.}$$
For the proof, see Theorem 9.1 of \cite{NTrV1}.
Analogously, the same bound holds with $\mu$ replaced by $\sigma$. As a consequence, we infer that for all $\lambda>0$,
$$(\mu+\sigma)\bigl\{x\in\R^{d}:|R_{\ve}\nu(x)|>\lambda\bigr\}\leq c\,\frac{\|\nu\|}\lambda.$$
That is,  $R$ is bounded from $M(\R^d)$ into $L^{1,\infty}(\mu+\sigma)$. In particular, 
$R_{\mu+\sigma}$ is of weak type $(1,1)$ with respect to $\mu+\sigma$.
This implies that $R_{\mu+\sigma}$ is bounded in $L^2(\mu+\sigma)$. For the proof, based
on interpolation, see Theorem 10.1 of \cite{NTrV1} (an alternative argument based on a good lambda
inequality can be also found in Chapter 2 of the book \cite{Tolsa-book}).
\end{proof}

Let us remark that the preceding proposition and its proof  remain valid for more general Calder\'on-Zygmund operators.
However, we will need it only for the Riesz transforms.

\vv
In the proof of the Main Lemma \ref{teocov} it will  be convenient to work with an $\ve$-regularized
version $\wt R_{\mu,\ve}$ of the  Riesz transform $R_{\mu}$. We set
$$
\wt R_{\mu,\ve}f(x) = \int \frac{x-y}{\max(|x-y|,\ve)^{n+1}}\,f(y)\,d\mu(y).
$$
It is easy to check that
$$|\wt R_{\mu,\ve}f(x) -  R_{\mu,\ve}f(x)|\leq c\,M_\mu f(x)\qquad\mbox{for all $x\in\R^d$},$$
where $c$ is independent of $\ve$ and
$M_\mu$ is the centered maximal Hardy-Littlewood operator with respect to $\mu$:
$$M_\mu f(x)=\sup_{r>0} \frac1{\mu(B(x,r))}\int_{B(x,r)}|f|\,d\mu.$$
Since $M_\mu$ is bounded in $L^2(\mu)$, it turns out that
$R_\mu$ is bounded in $L^2(\mu)$ if and only if the operators $\wt R_{\mu,\ve}$ are bounded in $L^2(\mu)$
uniformly on $\ve>0$. The advantage of $\wt R_{\mu,\ve}$ over $R_{\mu,\ve}$ is that the kernel
$$K_\ve(x) = \frac{x}{\max(|x|,\ve)^{n+1}}$$
is continuous and
satisfies the smoothness condition
$$|\nabla K_\ve(x)|\leq \frac{c}{|x|^{n+1}},\quad |x|\neq\ve$$
(with $c$ independent of $\ve$),
which implies that $K_\ve(x-y)$
is a Calder\'on-Zygmund kernel (with constants independent of $\ve$), unlike the kernel of $R_{\mu,\ve}$.


\begin{proof}[\bf Proof of the Main Lemma \ref{teocov}]

We follow an idea of H. Pajot (see Theorem 10
of \cite{Pajot}), where some measures $\mu_k$ satisfying (a) and (b) are constructed.
For the reader's convenience, we will repeat  the arguments of the construction and of the proof
of (b), and subsequently we will show that
the statement (c) holds.


Consider the subset $F\subset\supp\mu$ of those $x\in\R^d$ for which $\theta^{n}_*(x,\mu)>0$, so that
$\mu(\R^d\setminus F)=0$. For  positive
integers $p,s$, we denote
\begin{align*}
F_p &=\left\{x\in F:\mbox{ for $0<r\leq  D$, $\mu(B(x,r))\geq \frac1p \,r^n$}\right\}, \\
F_{p,s} &=\left\{x\in F_p:\mbox{ for $0<r\leq  D$, $\mu(F_p\cap B(x,r))\geq \frac1{ps} \,r^n$}\right\}\,,
\end{align*}
where $D=\diam(\supp \mu)$.
From the definitions of $F$ and $F_p$, it is clear that  
$$
F=\bigcup_{p\geq 1} F_p.
$$
Also, $\theta_*^n(x,\mu) = \theta_*^n(x,\mu\rest F_p)$ for $\mu$-a.e.\ $x\in F_p$ by the Lebesgue 
differentiation theorem, and thus
$$
\mu\Bigl(F_p\setminus \bigcup_{s\geq1} F_{p,s}\Bigr)=0.
$$
So we have
$$
\mu\leq \sum_{p,s\geq1} \mu\rest F_{p,s}.
$$

The strategy of the construction consists in adding a measure $\sigma_{p,s}$ to each $\mu\rest F_{p,s}$ so that
the resulting measure is AD-regular, for each $p,s$.

It is easy to check that all the sets $F_p$  and $F_{p,s}$ are compact.  Fix $p,s$ and denote
$$
d(x) = \frac1{10}\,\dist(x,F_{p,s}).
$$
Notice that $d(y)>0$ if $x\not\in F_{p,s}$, as $F_{p,s}$ is closed.
Now we cover $F_p\setminus F_{p,s}$ by a family of balls of the form $B(x,d(x))$, with $x\in F_p\setminus
F_{p,s}$, using Besicovitch's covering theorem. So there exists a family of points $H_{p,s}\subset F_p\setminus F_{p,s}$, at most countable, such that 
$$
F_p\setminus F_{p,s}\subset\bigcup_{x\in H_{p,s}} B(x,d(x)),
$$
and
$$\sum_{x\in H_{p,s}} \chi\ci{B(x,d(x))}\leq C_d\,.$$
 Moreover, we can split $H_{p,s}=\bigcup_{i=1}^{N_d} H_{p,s}^i$ so that
for each $i$, the balls from $\{B(x,d(x))\}_{x\in H_{p,s}^i}$ are pairwise disjoint (see Theorem 2.7 in p.\ 30 of \cite{Mattila-llibre}) . Here   $C_d, N_d$ are some constants depending on $d$ only.

To define $\sigma_{p,s}$, for each $x\in H_{p,s}$ we consider an arbitrary $n$-plane $\Pi_x$ containing 
$x$ and  set $P_x=\Pi_x\cap B(x,\frac12 d(x))$. Then we define
$$
\sigma_{p,s} = \HH^n\rest \Pi_{p,s} +\sum_{x\in H_{p,s}} \HH^n\rest P_x
$$
where $\Pi_{p,s}$ is an arbitrary $n$-plane in $\R^d$ intersecting $F_{p,s}$.
We set
$$\mu_{p,s}= \sigma_{p,s} + \mu\rest F_{p,s}.$$
We also denote
$$\sigma_{p,s}^i = \sum_{x\in H_{p,s}^i} \HH^n\rest P_x,$$
so that $\sigma_{p,s} = \HH^n\rest \Pi_{p,s} +\sum_{i=1}^{N_d}\sigma_{p,s}^i$.
We will show now that $\mu_{p,s}$ is AD-regular.

\vspace{3mm}
\noindent
{\bf Upper AD-regularity of  $\mu_{p,s}$. }
We have to show that $\mu_{p,s}$ has growth of degree $n$. Since $\mu\rest F_{p,s}$ and $\HH^n\rest\Pi$ have growth of
degree $n$, it is enough to show that so does $\sigma_{p,s}^i$, for each 
$i=1,\ldots,N_d$.

We set  $r(x)=\frac12\,d(x)$ for $x\in H_{p,s}^i$. So $\sigma_{p,s}^i$ is a measure supported on the union of the closed balls 
\begin{equation}\label{eqbj}
B_x:=B(x,r(x))= B(x,\frac12\,d(x)),\quad \mbox{ $x\in H_{p,s}^i$,}
\end{equation}
coinciding with $\HH^n\rest P_x$   inside $B_x$. Notice also that
the balls $2B_x$, $x\in  H_{p,s}^i$ are pairwise disjoint.

Let $\Delta$ be some fixed closed ball of radius $r(\Delta)$. Let $H_a$ be the subset of points $x\in H_{p,s}^i$ such that
$B_x\cap \Delta\neq\varnothing$ and $2r(\Delta)<r(x)$, and  let $H_b\subset H_{p,s}^i$ be the subset of points such that
$B_x\cap \Delta\neq\varnothing$ and $2r(\Delta)\geq r(x)$. We have
$$
\sigma_{p,s}^i(\Delta) = \sum_{x\in H_a}\HH^n(P_{x}\cap\Delta) + \sum_{x\in H_b}\HH^n(P_{x}\cap \Delta),$$

It is immediate that for $x\in H_a$ we have $\Delta\subset 2B_x$. Thus, since the balls $2B_x$ are
pairwise disjoint, $H_a$ contains at most one point $z$, and so,
$$\sum_{x\in H_a}\HH^n(P_{x}\cap\Delta)=\HH^n(P_z\cap\Delta) \leq c\, r(\Delta)^n.$$
On the other hand, for $x\in H_b$, we have $B_x\subset 5\Delta$. Recall also
that, since $B_j$ is centered at some point from $F_p$, 
\begin{equation}\label{eqda3}
\mu(B_x)\geq \frac1p\,r(x)^n
\end{equation}
(recall that $r(x) =\frac12 d(x)\le D$ because $d(x)$ is the distance between two points in $\supp\mu$).
As a consequence,
$$\sum_{x\in H_b}\HH^n(P_{x}\cap \Delta)\leq c\sum_{x\in H_b}r(x)^n \leq cp\sum_{x\in H_b}\mu(B_x)
\leq cp\,\mu(5\Delta)\leq cp\,r(\Delta)^n,$$
by the growth of degree $n$ of $\mu$.

\vspace{3mm}
\noindent
{\bf Lower AD-regularity of  $\mu_{p,s}$. }
Consider an arbitrary closed ball $\Delta$ centered at $\supp\mu_{p,s}$. Suppose first that it is 
centered at some point $z\in F_{p,s}$.

 First we claim that if $2B_x$, $x\in H_{p,s}$,
intersects $\frac12 \Delta$, then $2B_x\subset \Delta$. Indeed, recalling that the function $d(\cdot)$
is $\frac1{10}$-Lipschitz, we get
$$10\,d(x) = \dist(x,F_{p,s}) \leq |x-z| \leq \frac12\,r(\Delta) + 2r(x) =\frac12\,r(\Delta) + d(x).$$
Thus, 
$$r(2B_x) = 2r(x) =d(x)\leq \frac{1}{18}\,r(\Delta),$$
which clearly implies that $2B_x\subset \Delta$. 

From the claim above and the definition of $\mu_{p,s}$ we deduce that
$$
\mu_{p,s}(\Delta)\geq \mu(F_{p,s}\cap \Delta) + c^{-1}\sum_{x\in H_{p,s}:2B_x\cap \frac12\Delta\neq\varnothing}
r(x)^n.$$
Since $\mu$ has growth 
of degree $n$, and since 
$$
\bigcup_{x\in H_{p,s}:2B_x\cap \frac12\Delta\neq\varnothing}2B_x \supset F_p\cap\frac12\Delta\setminus F_{p,s}\,,
$$ 
we derive
\begin{align*}
\mu_{p,s}(\Delta)& \geq \mu(F_{p,s}\cap \Delta) + c^{-1}\sum_{x\in H_{p,s}:2B_x\cap \frac12\Delta\neq\varnothing}
\mu(2B_x)\\
& \geq \mu\Bigl(F_{p,s}\cap \frac12 \Delta\Bigr) + c^{-1}\mu\Bigl(F_p\cap\frac12\Delta\setminus F_{p,s}\Bigr)
\geq c^{-1} \mu\Bigl(F_p\cap \frac12 \Delta\Bigr).
\end{align*}
Since $\Delta$ is centered at some point from $F_{p,s}$, 
$$\mu\Bigl(F_p\cap \frac12 \Delta\Bigr)\geq c^{-1}\,\frac1{ps}\,r(\Delta)^n,$$
provided that $r(\Delta)\leq 2D$. On the other hand, if $r(\Delta) \ge 2D$  then $\dist ( z, \Pi_{p,s})\le D$, and, thereby, 
$$
\mu_{p,s} (\Delta) \geq \HH^n (B(z', \frac12r(\Delta)) \geq c\, r(\Delta)^n\,,
$$
where $z'$ is the nearest to $z$ point of $\Pi_{p,s}$.

\vv
If $\Delta$ is centered on $\Pi_{p,s}$, the lower bound is trivial.

\vv
Suppose now that $\Delta$ is centered at some point  $z\in P_{x}$, for some $x\in H_{p,s}$.
If $r(\Delta)\leq 40 r(B_x)$, from the lower AD-regularity of $\HH^n\rest P_x$ we infer that
$\mu(\Delta)\geq c^{-1}\,r(\Delta)^n$. Assume now that $r(\Delta)> 40 r(x) =20d(x)$. In this case,
by the definition of $d(x)$, there exists some $y\in F_{p,s}$
satisfying
$$|z-y|\leq |z-x| + |x-y|\leq r(x)+ 10d(x) = \frac{21}{2}\,d(x) \leq \frac{21}{40}\,r(\Delta).$$
Thus, $\Delta$ contains the ball $B(y,\frac1{10}\,r(\Delta))$. Then, since 
$$\mu\Bigl(B\Bigl(y,\frac1{10}\,r(\Delta)\Bigr)\Bigr) \geq c^{-1}\,\frac1{ps}\,r(\Delta)^n,$$
we are done.

\vspace{3mm}
\noindent
{\bf Boundedness of $R_{\mu_{p,s}}$ in $L^2(\mu_{p,s})$. } 
Taking into account that $R_{\mu\rest F_{p,s}}$ is bounded in $L^2(\mu\rest F_{p,s})$, and  that $R_{\HH^n\rest\Pi_{p,s}}$ is bounded in $L^2({\HH^n\rest\Pi_{p,s}})$, it is enough to show that $R_{\sigma_{p,s}^i}$ is bounded in $L^2(\sigma_{p,s}^i)$ for each $i=1,\ldots,N_d$. Then the repeated application
of Proposition \ref{proposuma} yields the result.

To simplify notation, for fixed $p,s,i$, we denote $\sigma=\sigma_{p,s}^i$, $H=H_{p,s}^i$. 

Now we define
$$\nu = \sum_{x\in H} c_x\,\mu\rest B_x,$$
with $c_x=\HH^n(P_{x})/\mu(B_x)$. Observe that the constants $c_x$, $x\in H$, are uniformly bounded by some constant depending on $p$, because of \rf{eqda3},
and thus $R_\nu$ is bounded in $L^2(\nu)$. Further,
$\nu(B_x) = \sigma(B_x)$ for each $x\in H$. Recall also that, by construction both $\sigma$ and $\nu$
are supported on the union of the balls $B_x, x\in H,$ and the double balls $2B_x$ are pairwise disjoint.

It is clear that, in a sense, $\nu$ can be considered as an approximation of $\sigma$ (and
 conversely). To prove the boundedness of $R_\sigma$ in $L^2(\sigma)$, 
 we will prove that $\wt R_{\sigma,\ve}$ is bounded in $L^2(\sigma)$ uniformly on $\ve>0$ by comparing 
 it to $\wt R_{\nu,\ve}$.
First we need to introduce some local and non local operators: given $z\in \bigcup_{x\in H} B_x$, we denote by
$B(z)$ the ball $B_x, x\in H,$ that contains $z$. Then we write, for $z\in\bigcup_{x, x\in H} B_x$,
$$R^{loc}_{\nu,\ve} f(z) = \wt R_{\nu,\ve}(f\chi\ci{B(z)})(z),
 \qquad R^{nl}_{\nu,\ve} f(z) = \wt R_{\nu,\ve}(f\chi\ci{\R^d \setminus B(z)})(z).$$
We define analogously $R^{loc}_{\sigma,\ve} f$ and $R^{nl}_{\sigma,\ve} f$.
It is straightforward to check that $R^{loc}_{\nu,\ve}$ is bounded in $L^2(\nu)$, and
that $R^{loc}_{\sigma,\ve}$ is bounded in $L^2(\sigma)$, both uniformly
on $\ve$ (in other words, 
$R^{loc}_{\nu}$ is bounded in $L^2(\nu)$ and
 $R^{loc}_{\sigma}$ is bounded in $L^2(\sigma)$). 
Indeed,
$$\|R^{loc}_{\sigma,\ve}f\|_{L^2(\sigma)}^2 = \sum_{x\in H}
\|\chi\ci{B_x}\wt R_{\sigma,\ve}(f\chi\ci{B_x})\|_{L^2(\sigma)}^2 \leq c\sum_{x\in H}
\|f\chi\ci{B_x}\|_{L^2(\sigma)}^2 = c\|f\|_{L^2(\sigma)}^2,$$
by the boundedness of the $n$-Riesz transforms on $n$-planes. Using the boundedness
of $R_\nu$ in $L^2(\nu)$, one derives the $L^2(\nu)$ boundedness of $R^{loc}_{\nu,\ve}$ analogously.

\vspace{2mm}
\noindent
{\bf Boundedness of $R^{nl}_{\sigma}$ in $L^2(\sigma)$. }
We must show that 
$R^{nl}_{\sigma}$ is bounded in $L^2(\sigma)$. Observe first that, since 
$R^{nl}_{\nu,\ve} = \wt R_{\nu,\ve} - R^{loc}_{\nu,\ve}$, and both
$\wt R_{\nu,\ve}$ and $R^{loc}_{\nu,\ve}$ are bounded in $L^2(\nu)$, it turns out that 
$R^{nl}_{\nu,\ve}$ is bounded in $L^2(\nu)$ (all uniformly on $\ve>0$).

We will prove below that, for all $f\in L^2(\nu)$ and $g\in L^2(\sigma)$ satisfying
\begin{equation}\label{eqcond33}
\int_{B_x} f\,d\nu = \int_{B_x} g\,d\sigma\qquad \mbox{for all $x\in H$,}
\end{equation}
we have
\begin{equation}\label{eqkey1}
I(f,g) := \int |R^{nl}_{\nu,\ve} f - R^{nl}_{\sigma,\ve} g|^2\,d(\nu + \sigma) \leq
c\,(\|f\|_{L^2(\nu)}^2 + \|g\|_{L^2(\sigma)}^2),
\end{equation} 
uniformly on $\ve$.
Let us see how the boundedness of $R^{nl}_{\sigma}$ in $L^2(\sigma)$ follows from this
 estimate.
As a preliminary step, we show that $R_\sigma^{nl}:L^2(\sigma)\to L^2(\nu)$ is bounded. To this
end, given $g\in L^2(\sigma)$, we consider a function $f\in L^2(\nu)$ satisfying \rf{eqcond33}
that is constant on each ball $B_j$. It is straightforward to check that
$$
\|f\|_{L^2(\nu)}\leq \|g\|_{L^2(\sigma)}.
$$
Then from the $L^2(\nu)$ boundedness of $R_\nu^{nl}$ and \rf{eqkey1}, we obtain
$$
\|R_{\sigma,\ve}^{nl}g\|_{L^2(\nu)}\leq \|R_{\nu,\ve}^{nl}f\|_{L^2(\nu)} + I(f,g)^{1/2}\leq 
c \|f\|_{L^2(\nu)} + c\,\|g\|_{L^2(\sigma)}\leq  c\,\|g\|_{L^2(\sigma)},
$$
which proves that $R_\sigma^{nl}:L^2(\sigma)\to L^2(\nu)$ is bounded. 

Notice that $R^{nl}_\ve$ is antisymmetric. Indeed, its kernel is
$$
\left[ 1-\sum_{x\in H} \chi\ci{B_x}(z)\chi\ci{B_x}(y)\right]\,\frac{z-y}{\max(|z-y|,\ve)^{n+1}}\,.
$$
Then, by duality, we deduce that
$R_\nu^{nl}:L^2(\nu)\to L^2(\sigma)$ is bounded.
To prove now the $L^2(\sigma)$ boundedness of $R_\sigma^{nl}$, we consider an arbitrary function
$g\in L^2(\sigma)$, and we construct $f\in L^2(\nu)$ satisfying \rf{eqcond33} which is constant
in each ball $B_x$.  Again, we have $\|f\|_{L^2(\nu)}\leq \|g\|_{L^2(\sigma)}.$

Using the boundedness of $R_\nu^{nl}:L^2(\nu)\to L^2(\sigma)$ together with \rf{eqkey1}, we obtain
$$\|R_{\sigma,\ve}^{nl}g\|_{L^2(\sigma)}\leq \|R_{\nu,\ve}^{nl}f\|_{L^2(\sigma)} + I(f,g)^{1/2}\leq 
c \|f\|_{L^2(\nu)} + c\,\|g\|_{L^2(\sigma)}\leq  c\,\|g\|_{L^2(\sigma)},$$
as wished. 

It remains to prove that \rf{eqkey1} holds for $f\in L^2(\nu)$ and $g\in L^2(\sigma)$ satisfying
\rf{eqcond33}.
For $z\in\bigcup_{x\in H} B_x$, we have
$$|R^{nl}_{\nu,\ve} f(z) - R^{nl}_{\sigma,\ve} g(z)| \leq \sum_{x\in H:z\not\in B_x} \left|\int_{B_x} K_\ve(z-y)
(f(y)\,d\nu(y)-g(y)\,d\sigma(y))\right|,$$
where $K_\ve(z)$ is the kernel of the $\ve$-regularized  $n$-Riesz transform.
By standard estimates, using \rf{eqcond33}, the fact that the balls $2B_x$, $x\in H$, are pairwise disjoint,
and the smoothness of $K_\ve$, it follows that
\begin{align*}
\biggl|\int_{B_x} K_\ve&(z-y)
(f(y)\,d\nu(y)-g(y)\,d\sigma(y))\biggr| \\ &= \left|\int_{B_x} (K_\ve(z-y)- K_\ve(z-x))
(f(y)\,d\nu(y)-g(y)\,d\sigma(y))\right|\\
&\leq c\int_{B_x}\frac{|x-y|}{|x-y|^{n+1}}(|f(y)|\,d\nu(y)+|g(y)|\,d\sigma(y))\\
&\approx \frac{r(x)}{\dist(B(z),B_x)^{n+1}}\,\int_{B_x}(|f|\,d\nu+|g|\,d\sigma).
\end{align*}
Recall that $B(z)$ stands for the ball $B_x, x\in H,$ that contains $z$.

We consider the operators
$$
T_\nu(f)(z) = \sum_{x\in H: z\notin B_x}\frac{r(x)}{\dist(B(z),B_x)^{n+1}}\,\int_{B_x}
f\,d\nu\,,
$$
and $T_\sigma$, which is defined in the same way with $\nu$ replaced by $\sigma$.
Observe that
\begin{align*}
I(f,g)& \leq c\|T_\nu(|f|) + T_\sigma(|g|)\|_{L^2(\nu+\sigma)}^2 \\
&\leq 
2c\|T_\nu(|f|)\|_{L^2(\nu+\sigma)}^2 + 2c\|T_\sigma(|g|)\|_{L^2(\nu+\sigma)}^2 \\
&= 
4c\|T_\nu(|f|)\|_{L^2(\nu)}^2 + 4c\|T_\sigma(|g|)\|_{L^2(\sigma)}^2,
\end{align*}
where, for the last equality, we took into account that both $T_\nu(|f|)$ and $T_\sigma(|g|)$ are constant on each ball
$B_x$ and that $\nu(B_x)=\sigma(B_x)$ for all $x\in H$.

To finish the proof of \rf{eqkey1} it is enough to show that $T_\nu$ is bounded in $L^2(\nu)$ and 
$T_\sigma$ in $L^2(\sigma)$. We only deal with $T_\nu$, since the arguments for $T_\sigma$ are
analogous. We argue by duality again. So we consider non-negative functions $f,h\in L^2(\nu)$. We have
\begin{align*}
\int T_\nu(f)\,h\,d\nu &\approx \int \left(\sum_{x\in H:z\not\in B_x}\frac{r(x)}{\dist(z,B_x)^{n+1}}\,\int_{B_x}
f\,d\nu\right)\,h(z)\,d\nu(z)\\
&=
\sum_{x\in H}r(x) \int_{B_x}f\,d\nu  \int_{\R^d\setminus B_x}\frac1{\dist(z,B_x)^{n+1}}\,
\,h(z)\,d\nu(z).
\end{align*}
From the growth of degree $n$ of $\nu$ and the fact that the balls $2B_x$ are disjoint, it follows easily that
$$
\int_{\R^d \setminus B_x}\frac1{\dist(z,B_x)^{n+1}}\,
\,h(z)\,d\nu(z)\leq \frac{c}{r(x)}\,M_\nu h(y),
$$
for all $y\in B_x$, where $M_\nu$ stands for the (centered) maximal Hardy-Littlewood operator (with respect to $\nu$).
 Then we deduce that
$$\int T_\nu(f)\,h\,d\nu\lesssim \sum_{x\in H} \int_{B_x}f(y)\,M_\nu h(y)\,d\nu(y)\lesssim
\|f\|_{L^2(\nu)} \|h\|_{L^2(\nu)},$$
by the $L^2(\nu)$ boundedness of $M_\nu$. Thus $T_\nu$ is bounded in $L^2(\nu)$.
\end{proof}


\section{Proof of Theorem \ref{teo2}}

It is already known that if 
$E\subset\R^{n+1}$ with $\HH^n(E)
<\infty$  is removable for Lipschitz harmonic functions, then it must be purely 
$n$-unrectifiable (see \cite{MP}). So we only have to show that if $E$ is non removable, then
it is not purely $n$-unrectifiable.
The proof follows from Theorem \ref{teo1} and results from
\cite{Vo}, by standard arguments. However, for the reader's convenience we show all the details.

Let $E\subset\R^{n+1}$ be a compact set such that $\HH^n(E)<\infty$. Suppose
that it is not removable for Lipschitz harmonic functions. Then, by  Theorem 2.2 from
\cite{Vo}, there exists some measure $\mu$ supported on $E$ with growth of  degree $n$ such that
$R_\mu$ is bounded in $L^2(\mu)$.

The growth condition on $\mu$ implies that $\mu\ll\HH^n\rest E$. Indeed, 
let $A\subset\R^{n+1}$ be a Borel set.  Consider $\ve>0$ and any covering $A\subset \bigcup_i A_i$ with $\diam A_i\leq \ve$. For each $i\geq1$, let $B_i$ be a closed ball of radius
equal to $\diam(A_i)$ centered at some
point from $A_i$. Since $A\subset\bigcup_i B_i$,  we have
$$\mu(A)\leq \sum_i \mu(B_i)\leq c_0\sum_i \diam(A_i)^n.$$
Taking the infimum over all the coverings $\bigcup_i A_i$ as above, we obtain
$$\mu(A)\leq c_0\,\HH^n(A) \quad\mbox{ for any Borel set $A\subset\R^d$.}$$
As a consequence,
\begin{equation}\label{eqrad1}
\mu(A)=\mu(A\cap E)\leq c_0\,\HH^n(A\cap E) \quad\mbox{ for any Borel set $A\subset\R^d$.}
\end{equation}
That is, $\mu\ll\HH^n\rest E$.

From the Radon-Nykodim theorem we infer that there exists some function $g\in L^1(\HH^n\rest E)$
such that $\mu=g\,\HH^n\rest E$. In fact, from \rf{eqrad1} and the Lebesgue differentiation theorem
we deduce that $\|g\|_\infty\leq c_0$. 
Take $\eta>0$ small enough so that 
$$F=\{x\in E: g(x)>\eta\}$$
satisfies $\mu(F)>0$ (and thus $\HH^n(F)>0$ too).
Since $\mu\rest F= g\,\HH^n\rest F$, we have
$$\HH^n\rest F = g^{-1}\mu\rest F.$$
As $g^{-1}$ is bounded by $\eta^{-1}$ in $F$, we deduce that $R_{\HH^n\rest F}$ is bounded
in $L^2(\HH^n\rest F)$. As a consequence, $F$ is $n$-rectifiable, by Theorem \ref{teo1}.
That is, $E$ is not purely $n$-unrectifiable.


\end{document}